\def\bc{\begin{center}}
\def\ec{\end{center}}
\def\s2c{\vskip 2cm}
\def\bt{\begin{Theorem}}
\def\et{\end{Theorem}}
\def\bd{\begin{Definition}}
\def\ed{\end{Definition}}
\def\bl{\begin{Lemma}}
\def\el{\end{Lemma}}
\def\bcor{\begin{Corollary}}
\def\ecor{\end{Corollary}}
\def\bpr{\begin{Proposition}}
\def\epr{\end{Proposition}}
\newtheorem{Lemma}{Lemma}[section]
\newtheorem{Theorem}[Lemma]{Theorem}
\newtheorem{Proposition}[Lemma]{Propostion}
\newtheorem{Definition}[Lemma]{Definition}
\newtheorem{Corollary}[Lemma]{Corollary}
\newtheorem{theorem}{Theorem}[section]
\newtheorem{lemma}[theorem]{Lemma}
\newtheorem{definition}[theorem]{Definition}
\def\elsartstyle{%
    \def\normalsize{\@setfontsize\normalsize\@xiipt{14.5}}
    \def\small{\@setfontsize\small\@xipt{13.6}}
    \let\footnotesize=\small
    \def\large{\@setfontsize\large\@xivpt{18}}
    \def\Large{\@setfontsize\Large\@xviipt{22}}
    \skip\@mpfootins = 18\p@ \@plus 2\p@
    \normalsize
} \@ifundefined{square}{}{} \makeatother
\author{ Subha Pal and Rajib Haloi$^*$}
\date{}
\thanks{$^*$Corresponding author's e-mail:rajib.haloi@gmail.com, }
\begin{document}

\title{Existence and uniqueness of solutions to the damped Navier-Stokes equations with Navier  boundary conditions
for three dimensional incompressible fluid}
\maketitle \vskip .5cm \noindent {\bf Abstract:}
 {In this article, we study the   solutions of the  damped Navier--Stokes equation with Navier boundary condition 
 in a bounded domain $\Omega$ in $\mathbb{R}^3$ with smooth boundary.
 The existence of the solutions is global with the damped term $\vartheta |u|^{\beta-1}u, \vartheta >0.$ 
 The regularity and uniqueness of  solutions with Navier boundary condition is also studied. This extends the  existing results in literature.}

\vskip .3cm \noindent {\bf AMS Classification (2010):}{ 76D05,35A01, 35Q30,76D03.}
\vskip .3cm \noindent {\bf Keywords}: {Navier Stokes equation, Galerkin Method, Navier boundary condition, Damping term}

\section{Introduction}

In this article, we consider the following system of Navier-Stokes (N-S) equations in a simply connected bounded  domain
$\Omega$ of $\mathbb{R}^3$,  with sufficiently smooth boundary,
\begin{align}
 \partial_{t}u + u \cdot \nabla u + \vartheta |u|^{\beta - 1}u 
+\frac{1}{\rho}\nabla p -
 \mu \Delta u &= f \qquad  \text{in} ~ \ Q_T ,\label{be}\\
{\rm div} \ u &= 0 \qquad  \text{in} ~ \ Q_T ,\label{inc}\\
 u&=u_{0} \quad \hspace{.2cm} \text{in} ~ \ \Omega \times \{ 0 \},\label{inc0}\\
u \cdot \nu &=0 \qquad  \text{on} ~ \ \partial \Omega \times 
(0,T),\\
2D(u)\nu \cdot \tau + \alpha u \cdot \tau &=0 \qquad  \text{on} ~ \ \partial \Omega \times 
(0,T),\label{nosl}
\end{align}
where  $Q_T = \Omega \times (0,T), T> 0$. Here the unknown function $p = p(x,t)$ is the pressure and 
$u = u(x,t) = (u_1(x,t),u_2(x,t),u_3(x,t))$ is the velocity of the flow. $\beta \geq 1$ and $\vartheta > 0$ are two constants
in the  damping term $\vartheta |u|^{\beta - 1}u $.
 $\rho$ is the density, $f$ is the external force and 
$\mu > 0$ is the kinematic viscosity, $u_0$ is the initial velocity of the fluid. 
The rate of strain tensor $D(u)$ is   $D(u)=\frac{1}{2}\big[\nabla u+(\nabla u)^T\big]$, $\alpha (x) >0$
is a continuously differentiable  function  on the boundary $\partial \Omega$, $\nu$ and $\tau$
are unit exterior normal and the unit tangent vector to the boundary. The balance of momentum of the motion of the fluid 
is given by (\ref{be}) and the condition of incompressibility is expressed in  (\ref{inc}). The Navier slip boundary condition is 
given by  (\ref{nosl}). 

The  Navier-Stokes (N-S) equations models of important phenomena in fluid mechanics and describe the motion of fluid like water, air, oil in more general condition. So, it is important to the study of the various form of the solution of the N-S equations with different boundary conditions and are of practical importance. The analytical solutions  of non-linear   N-S equations are complicated to obtain except in some few simple situations. Even solutions for  simple situations  needs very high tools of mathematics.

The no-slip boundary condition ($u=0$ on the boundary) shows us the breakdown of the traditional macroscopic ideas at small scales
between the fluid and solid interface \cite{mat06}. To hold no-slip condition at the fluid-solid interface, it is necessary that the fluid is a continuum and the flow is in thermodynamic equilibrium. The no-slip condition means the fluid velocity relative to the solid boundary is zero. 
 This happens only if the flow is in thermodynamic equilibrium. This needs an infinitely high frequency of collisions between the fluid and the solid surface.  So, the tangential velocity slip must be allowed \cite{MH07} in the proportion
of the tangential component of the stress. Thus the  no-slip boundary condition is replaced by the Navier slip boundary condition
 \cite{mat06, max79, nav27}. We express the Navier slip condition  on $\partial\Omega \times (0,T)$ as 
\begin{eqnarray}
   u \cdot n &= &0,\label{sl1} ~ \\
   2D(u)\nu \cdot \tau + \alpha u \cdot \tau &=& 0.\label{sl2} 
\end{eqnarray}
where $2D(u)\nu \cdot \tau$ denotes the tangential component of 
$2D(u) \cdot n $ and $\alpha$ is 
the coefficient of proportionality.  The authors in  \cite{IS11,zki04,kas13,jm01,vc10,vc12,CA17} has studied the existence of 
solutions to N-S equations with  Navier slip boundary condition.
Clopeau \emph{et al.} \cite{cmr98} studied the regular solutions 
 for  two-dimensional incompressible Navier-Stokes equations with Navier
boundary conditions. Filho \emph{et al.}\cite{flp05} extend the work of 
Clopeau \emph{et al.} \cite{cmr98} to study the vorticities in $L^p$ with $p > 2$. 
Kelliher \cite{kel06} extends the work of  
Clopeau \emph{et al.} \cite{cmr98} and Filho \emph{et al.} \cite{flp05} 
for the bounded domain in $\mathbb R^2$.

 The system (\ref{be})--(\ref{nosl}) with $\vartheta > 0 $ describe the flow 
with the resistance to the motion such as porous media flow and drag or friction effects. From a mathematical viewpoint,
(\ref{be}) can be viewed as a modification of the  Navier-Stokes equation with the regularizing term $\vartheta
|u|^{\beta -1}u $. Thus it is important to study the regularity property and 
uniqueness of the weak solutions \cite{lion69}.  We mention that the system (\ref{be})--(\ref{nosl})  with no slip boundary condition has been studied 
 in \cite{cai08, song12, song15, zhang11,zhou12}.   Cai and Jiu \cite{cai08}  established the existence of a weak solution if $\beta \geq 1$ and 
 global strong solutions if $\beta \geq 7/2$  of the damped Navier-Stokes equation with no slip 
boundary condition. The uniqueness of the  strong solution is proved for $5 \geq \beta \geq 7/2$. For $\beta > 3$, Zhang \emph{et al.} \cite{zhang11},
showed that the  damped damped Navier-Stokes equation with no slip boundary has a global strong. They also obtained the uniqueness of strong the solution when $5 \geq \beta > 3$. Further, they proved that strong solution exists globally for $\beta=3$ and $\vartheta=\mu = 1$,
and the strong solution is unique even among solutions in $\L^{\infty}(0,T;L^2(\Omega)^3)$ for $\beta \geq 1$ in \cite{zhou12}.

In this article, we use the Galerkin approximation method to show the existence of the solutions to the damped N-S equations with Navier slip boundary condition in a simply connected domain of $\mathbb R^3$. It is worth 
noting that so far there are no results in the literature regarding the existence, uniqueness and regularity of the solutions to damped N-S equations with Navier slip boundary condition in $\mathbb R^3$. The article is organised as follows.
The preliminaries and assumptions are provided in Section \ref{preliminaries}. We prove the existence of  solutions in section \ref{weak} and the regularity of solutions in section \ref{exstrong}. We also proved the uniqueness of solution in Section \ref{unique}.

\section{Preliminaries and Assumptions}\label{preliminaries}
In this section we collect the notations, Lemmas and solution spaces. For more details, we refer to Temam \cite{tem79} and Sohr \cite{SH01}.
In the remaining part of the article,  $\Omega$ is  a bounded simply connected domain in $\mathbb{R}^3$ with sufficiently  smooth boundary.
We denote the set of all $C^{\infty}$ real vector function $\phi$ with compact support
in $\Omega$ is denoted by $C_0^{\infty}(\Omega)$. We use the Lebesgue space $L^p(\Omega)~ (1 \leq p \leq \infty)$
and the Sobolev space, 
$H^r(\Omega)=\{\phi \in L^2(\Omega) | \text{weak derivative of}~ \phi ~\text{upto order}~r ~\text{in}~ L^2(\Omega)\}$.
$(\cdot,\cdot)$ denotes $L^2$-inner product and
$$((u,v)) = \sum_{i=1}^n (D_i u, D_i v).$$
The norm of the above two inner-product will be denoted by $\|\cdot\|$ with clear subscripts. We define the following function space like in \cite{ph19}
\begin{center}
$L_0^2(\Omega)=\{z \in L^2(\Omega) : \int_{\Omega} zdx = 0 \},$
$V=\{ v \in H^1(\Omega)^3: {\rm div} \ v = 0 ~~\text{in}~~ \Omega, ~~ v\cdot \nu = 0 ~~\text{on}~~ \partial \Omega \}, $ \\[0.2cm]
$H=\{ v \in L^2(\Omega)^3: {\rm div} \ v = 0 ~~\text{in}~~ \Omega, ~~ v\cdot \nu = 0 ~~\text{on}~~ \partial \Omega\},$  \\[0.2cm]
$\mathcal{W} = \{ v \in V \cap H^2(\Omega)^3 : 2D(u)\nu \cdot \tau + \alpha u \cdot \tau =0 ~~\text{on} ~~
\partial \Omega \}.$
\end{center}
We give $\mathcal{W}$ the $H^2$-norm, $H$ the $L^2$-inner product and norm, which we symbolize by $(\cdot,\cdot)$ 
and $\|\cdot\|_{L^2(\Omega)}$, and $V$ the $H^1$-inner product,
\begin{center}
$((u,v)) = \sum_{i=1}^n (D_i u, D_i v)$
\end{center}
and associate norm. 

\noindent We define the trilinear form $b$ as 
\begin{equation}
b(u,v,w)= \int_{\Omega} (u \cdot \nabla v)\cdot w, ~~~ ~~ \forall u,v,w \in V.
\end{equation}
\begin{equation}
\text{If }u \in V, ~~\text{then}~~ b(u,v,v)=0 ~ , ~ \forall v \in H_0^1(\Omega). \label{p1}
\end{equation}

\noindent For $u,v \in V$, we define $B(u,v)$ by
$$(B(u,v),w)=b(u,v,w),~ \forall w \in V ,$$
and we set $B(u)=B(u,u) \in V' ~,~ \forall u \in V $. So,
\begin{equation}
(Bu,v)= \int_{\Omega} (u \cdot \nabla u) \cdot v \label{p11}.
\end{equation}
We shall use the following Lemma. We omit the proof and refer  to Temam \cite[Lemma III.3.1]{tem79}. 
\begin{lemma} \label{lb1}
Assuming dimension of the space $\leq 4$ and $u \in L^2(0,T;V)$. Then the function $Bu$ defined by
\begin{align*}
(Bu(t),v)=b(u(t),u(t),v), ~~~~ \forall v \in V
\end{align*}
belongs to $L^1(0,T;V')$. Moreover
\begin{equation}
\|Bu\|_{V'} \leq C \|u\|_{H^1}^2,~~~~~ \forall u \in V \label{c2}
\end{equation}
\end{lemma}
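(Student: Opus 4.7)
The plan is to establish the pointwise (in $t$) bound $\|Bu\|_{V'} \leq C\|u\|_{H^1}^2$ first, and then integrate in time to conclude $Bu \in L^1(0,T;V')$ using $u \in L^2(0,T;V)$.

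For the pointwise estimate, I would fix $t$ (and suppress it from notation), and bound $|b(u,u,v)|$ for an arbitrary test function $v \in V$. By H\"older's inequality with exponents $(4,2,4)$ applied to the three factors $u$, $\nabla u$, $v$, I get
\begin{equation*}
|b(u,u,v)| = \Bigl|\int_\Omega (u \cdot \nabla u)\cdot v\,dx\Bigr| \leq \|u\|_{L^4(\Omega)}\,\|\nabla u\|_{L^2(\Omega)}\,\|v\|_{L^4(\Omega)}.
\end{equation*}
This is the natural splitting because it puts all of the differentiated factor into $L^2$, and leaves the two undifferentiated factors in $L^4$.

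The key step is then to invoke the Sobolev embedding $H^1(\Omega)\hookrightarrow L^4(\Omega)$. This is precisely where the dimensional restriction $n\leq 4$ enters: for $n\leq 4$ one has $2n/(n-2)\geq 4$ (interpreted as $+\infty$ for $n\leq 2$), so $H^1\hookrightarrow L^4$ is continuous. Applying this to both $u$ and $v$ yields
\begin{equation*}
|b(u,u,v)| \leq C\,\|u\|_{H^1}\,\|u\|_{H^1}\,\|v\|_{H^1} = C\,\|u\|_{H^1}^2\,\|v\|_{H^1}.
\end{equation*}
Taking the supremum over $v\in V$ with $\|v\|_{H^1}\leq 1$ gives exactly the claimed bound \eqref{c2}.

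Finally, integrating in time and using $u\in L^2(0,T;V)$,
\begin{equation*}
\int_0^T \|Bu(t)\|_{V'}\,dt \leq C\int_0^T \|u(t)\|_{H^1}^2\,dt = C\,\|u\|_{L^2(0,T;V)}^2 < \infty,
\end{equation*}
so $Bu\in L^1(0,T;V')$. The only real ``obstacle'' is recognising the right H\"older split and remembering that the dimension hypothesis $n\leq 4$ is exactly what makes the $H^1\hookrightarrow L^4$ embedding available; the rest is routine. Measurability of $t\mapsto Bu(t)$ as a $V'$-valued function follows from that of $t\mapsto u(t)$ in $V$ and the continuity of $B:V\to V'$ implicit in the bound just proved.
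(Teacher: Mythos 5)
Your proof is correct: the H\"older split with exponents $(4,2,4)$, the embedding $H^1(\Omega)\hookrightarrow L^4(\Omega)$ valid exactly for dimension $\leq 4$, and the final integration in time using $u\in L^2(0,T;V)$ together give both claims. The paper itself omits the proof and refers to Temam (Lemma III.3.1), and your argument is precisely the standard one found there, so there is nothing to add.
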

\noindent We will use the following lemma from \cite[Lemma 2.1(ii)]{kas13}.
\begin{lemma}\label{pre}
For all $u,v,w \in H_{0}^1(\Omega)$, where $\Omega \in \mathbb{R}^3$ we have 
\begin{align}
|b(u,v,w)| \leq C \|u\|_{L^2(\Omega)}^{\frac{1}{4}} \|u\|_{H^1(\Omega)}^{\frac{3}{4}}\|v\|_{H^1(\Omega)}
\|w\|_{L^2(\Omega)}^{\frac{1}{4}} \|w\|_{H^1(\Omega)}^{\frac{3}{4}}. \label{p2}
\end{align}
\end{lemma}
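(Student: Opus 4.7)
The plan is to bound the trilinear form $b(u,v,w) = \int_\Omega (u\cdot\nabla v)\cdot w\,dx$ by Hölder's inequality in the product form with exponents $(4,2,4)$, and then convert the $L^4$-norms on $u$ and $w$ into the desired mixed $L^2$--$H^1$ bound via an interpolation (Gagliardo--Nirenberg / Ladyzhenskaya) inequality tailored to three dimensions.

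First I would expand $b(u,v,w) = \sum_{i,j}\int_\Omega u_i\, (\partial_i v_j)\, w_j\,dx$ and apply Hölder with $\tfrac14+\tfrac12+\tfrac14=1$, yielding
\[
 |b(u,v,w)| \leq \|u\|_{L^4(\Omega)}\,\|\nabla v\|_{L^2(\Omega)}\,\|w\|_{L^4(\Omega)} \leq \|u\|_{L^4(\Omega)}\,\|v\|_{H^1(\Omega)}\,\|w\|_{L^4(\Omega)}.
\]
Then, for the two $L^4$ factors, I would invoke the three‑dimensional Ladyzhenskaya inequality for $H_0^1$ functions, namely
\[
 \|\varphi\|_{L^4(\Omega)} \leq C\,\|\varphi\|_{L^2(\Omega)}^{1/4}\,\|\nabla\varphi\|_{L^2(\Omega)}^{3/4} \leq C\,\|\varphi\|_{L^2(\Omega)}^{1/4}\,\|\varphi\|_{H^1(\Omega)}^{3/4},
\]
which is a particular case of the Gagliardo--Nirenberg interpolation in $\mathbb{R}^3$. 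Applying this to $\varphi=u$ and $\varphi=w$ and multiplying the two estimates with the middle factor $\|v\|_{H^1(\Omega)}$ produces exactly the bound \eqref{p2}.

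The only real subtlety is justifying the Ladyzhenskaya/Gagliardo--Nirenberg inequality in the form needed. For functions in $H_0^1(\Omega)\subset H_0^1(\mathbb{R}^3)$ (extending by zero) this follows from the standard Sobolev embedding $H^1 \hookrightarrow L^6$ combined with the $L^2$--$L^6$ interpolation $\|\varphi\|_{L^4}\leq \|\varphi\|_{L^2}^{1/4}\|\varphi\|_{L^6}^{3/4}$, so no further domain regularity is required beyond what is already assumed. Once that is in hand, the rest is a direct multiplication of the three estimates, and no cancellation or integration‑by‑parts tricks (such as the skew‑symmetry $b(u,v,v)=0$ recorded in~\eqref{p1}) are needed here. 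I do not expect any step to be genuinely hard; the only place to be a little careful is tracking the exponents so that the final powers $\tfrac14$, $\tfrac34$, $1$, $\tfrac14$, $\tfrac34$ come out correctly.
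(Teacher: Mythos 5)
Your argument is correct, and the exponents check out: Hölder with $(4,2,4)$ followed by the three-dimensional Ladyzhenskaya inequality $\|\varphi\|_{L^4}\leq C\|\varphi\|_{L^2}^{1/4}\|\varphi\|_{H^1}^{3/4}$ (obtained from $L^2$--$L^6$ interpolation and the embedding $H^1\hookrightarrow L^6$) gives exactly the bound \eqref{p2}. The paper itself gives no proof and simply cites Kashiwabara's Lemma 2.1(ii), where the estimate is established by precisely this standard route, so your proposal coincides with the intended argument.
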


\noindent Next we define the operator $A$ as follows
\begin{equation}
(Au,v) = 2((u,v)) + \int_{\partial \Omega} \alpha (u \cdot \tau)(v \cdot \tau), ~~ \forall u,v \in V \label{p12}. 
\end{equation}
We have 
\begin{equation}
|(Au,v)| \leq \|u\|_V \|v\|_V +  L_1\|u\|_{L^2(\partial \Omega)}\|v\|_{L^2(\partial \Omega)} \leq L\|u\|_V \|v\|_V, \label{c1}
\end{equation}
where $L_1$ and $L$ are positive constants. So $A : L^2([0,T];V) \rightarrow L^2([0,T];V')$.

The existence of a complete orthogonal basis for $H$ was proved in \cite{ph19}. We recall their result as a  lemma and use it to prove the main theorem. 
\begin{lemma}\label{basis}
There exists a basis $\{ v_n\} \subset H^3(\Omega)^3$, for $V$ which also serves  as an orthonormal basis for
$H$, that satisfies 
\begin{align}
2D(u)\nu \cdot \tau + \alpha u \cdot \tau =0 ~~\text{on} ~~ \partial \Omega.\label{p21}
\end{align} 
\end{lemma}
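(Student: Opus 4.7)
The plan is to realize $\{v_n\}$ as eigenfunctions of a compact self-adjoint positive operator built from $A$ in (\ref{p12}), and then bootstrap their regularity via the elliptic theory for the Stokes problem with Navier boundary conditions.

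First, the symmetric bilinear form $a(u,v) := (Au, v)$ on $V\times V$ is continuous by (\ref{c1}). Coercivity $a(u,u) \geq c\|u\|_V^2$ follows from a Korn-type inequality on the subspace of $H^1(\Omega)^3$ determined by $u\cdot\nu = 0$ on $\partial\Omega$, together with $\alpha > 0$ on $\partial\Omega$. Hence Lax-Milgram identifies $A$ as a topological isomorphism $V\to V'$, and the composition $T := A^{-1}\circ\iota : H\to H$ (where $\iota:H\hookrightarrow V'$ is the canonical embedding) is compact (via the Rellich embedding $V\hookrightarrow H$), self-adjoint, positive, and injective.

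Next, by the spectral theorem for compact self-adjoint positive operators, $T$ has a sequence of eigenfunctions $\{v_n\}\subset V$ forming an $H$-orthonormal basis, with eigenvalues $\mu_n\to 0^+$. Setting $\lambda_n:=\mu_n^{-1}$, one has
$$a(v_n, v) = \lambda_n (v_n, v)\qquad\forall v\in V,$$
so $\{v_n\}$ is orthogonal in $V$ with respect to $a(\cdot,\cdot)$, and its span is dense in $V$ since $V$ with inner product $a$ is complete and $T$ is injective.

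To promote each $v_n$ to $H^3(\Omega)^3$ and to extract the second condition in (\ref{p21}), I would reinterpret the weak eigenvalue identity as a stationary Stokes system with Navier boundary data: there exists $p_n\in L_0^2(\Omega)$ so that formally $-\mu\Delta v_n + \nabla p_n = \lambda_n v_n$, ${\rm div}\, v_n = 0$ in $\Omega$, and $v_n\cdot\nu = 0$ on $\partial\Omega$. Integration by parts in $a(v_n, v)$ produces a residual boundary integral $\int_{\partial\Omega}\bigl(2D(v_n)\nu\cdot\tau + \alpha\, v_n\cdot\tau\bigr)(v\cdot\tau)\,dS$, whose vanishing for every admissible tangential trace $v\cdot\tau$ enforces (\ref{p21}) pointwise. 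Elliptic regularity for the Stokes problem with Navier boundary conditions on a smooth bounded domain then upgrades $v_n\in L^2$ to $v_n\in H^2$, and one further application of the same estimate with the new right-hand side $\lambda_n v_n\in H^2$ yields $v_n\in H^3$.

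The principal obstacle is this last regularity step: Agmon-Douglis-Nirenberg estimates with the mixed boundary system $u\cdot\nu=0$, $2D(u)\nu\cdot\tau + \alpha\, u\cdot\tau = 0$ are substantially less standard than the no-slip counterpart. Verifying the complementing (Lopatinski-Shapiro) condition and exploiting the smoothness of both $\partial\Omega$ and $\alpha$ to close the bootstrap are where the essential work lies; this is exactly the point at which the smooth-boundary hypothesis is consumed.
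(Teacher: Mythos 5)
Your construction---eigenfunctions of the operator $A$ associated with the form $a(u,v)=(Au,v)$ via Lax--Milgram and the spectral theorem, followed by Stokes--Navier elliptic regularity to reach $H^3(\Omega)^3$ and to read off \eqref{p21} as the natural boundary condition---is correct in outline and is the standard route; the paper itself gives no proof of Lemma \ref{basis}, deferring entirely to the earlier work \cite{ph19}, which proceeds in essentially this way. The only refinements worth noting are that coercivity of $a$ on $V$ follows from a Poincar\'e inequality on $V$ (the form is built from the full gradient $((\cdot,\cdot))$, so Korn is not needed), and that identifying the conormal derivative of $2((u,v))$ with $2D(u)\nu\cdot\tau$ for divergence-free fields tangent to $\partial\Omega$ uses a standard identity involving the curvature of the boundary.
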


\section{Existence of solutions} \label{weak}
 In this sections, we prove the existence of solutions to the damped Navier-Stokes system (\ref{be})--(\ref{nosl}) with $\rho =1$. The idea of the proof is similar to that of \cite{tem79} for
the classical Navier-Stokes equations. The definition of solutions is given as usual way.

\begin{definition}
The function $u(x,t)$ is called a solutions of the damped Navier-Stokes system (\ref{be})--(\ref{nosl}), if for any $T > 0$ the 
following conditions are satisfied:\\[0.3cm]
(1) $u \in L^2(0,T;V) \cap L^{\infty}(0,T;H) \cap L^{\beta + 1}(0,T;L^{\beta + 1}(\Omega)),$ \\
(2) for any $v \in V$, we have
   \begin{align} \label{wsol}
   (u',v) &+ 2\mu ((u,v)) + b(u,u,v) + (\vartheta |u|^{\beta -1}u,v) \nonumber  \\
          &+ \mu \int_{\partial \Omega} \alpha (u \cdot \tau) (v \cdot \tau) dS = (f,v). 
   \end{align}
  
\end{definition} 

We state the following  compactness result. For the proof, we refer to Cai and Jiu\cite[Lemma 2.1]{cai08}.

\begin{lemma}\label{lem1}
Let $X_0, X$ be Hilbert space satisfying a compact imbedding $X_0 \xhookrightarrow{} X $. Let $0 < \gamma \leq 1$ and 
$(v_j)_{j=1}^{\infty}$ be a sequence in $L^2(R,X_0)$ satisfying
$$\sup_{j} \bigg( \int_{-\infty}^{+\infty} \|v_j\|_{X_0}^2 dt \bigg) < \infty, ~~~~ 
\sup_{j}\bigg( \int_{-\infty}^{+\infty} |\xi|^{2\gamma}\|\hat{v}_j\|_{X}^2 d\xi \bigg) < \infty ,$$ 
where 
$$\hat{v_j}(\xi) = \int_{-\infty}^{\infty} v_j(t)\exp (-2\pi i \xi t)dt$$
is the Fourier transformation of $v_j(t)$ on the time variable. Then there exists a subsequence of $(v_j)_{j=1}^{\infty}$
which converges strongly in $L^2(R;X)$ to some $v \in L^2(R;X)$. 
\end{lemma}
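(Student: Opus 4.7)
The plan is to exploit the two bounds by isolating where each plays a role on the Fourier side: the $L^2(\mathbb{R};X_0)$ bound will control low frequencies through the compact embedding $X_0 \hookrightarrow X$, while the weighted bound will control high frequencies through the weight $|\xi|^{2\gamma}$. First I would use reflexivity of $L^2(\mathbb{R};X_0)$ together with the first hypothesis to extract a subsequence with $v_j \rightharpoonup v$ weakly in $L^2(\mathbb{R};X_0)$. Lower semicontinuity of the weighted $L^2$ norm under weak limits of $\hat v_j$ shows that $v$ also satisfies the second hypothesis, so after replacing $v_j$ by $v_j - v$ the problem reduces to proving $v_j \to 0$ strongly in $L^2(\mathbb{R};X)$ under the same two bounds.

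I would then pass to the Fourier side via Plancherel for the $X$-valued Fourier transform and split at a threshold $M > 0$:
\[
\int_{\mathbb{R}} \|v_j(t)\|_X^2\,dt \;=\; \int_{|\xi|\le M} \|\hat v_j(\xi)\|_X^2\,d\xi \;+\; \int_{|\xi|>M} \|\hat v_j(\xi)\|_X^2\,d\xi.
\]
The high-frequency term is immediate: on $\{|\xi|>M\}$ one has $1 \le (|\xi|/M)^{2\gamma}$, so
\[
\int_{|\xi|>M}\|\hat v_j(\xi)\|_X^2\,d\xi \;\le\; M^{-2\gamma}\sup_j\int_{\mathbb{R}} |\xi|^{2\gamma}\|\hat v_j(\xi)\|_X^2\,d\xi \;\le\; C\,M^{-2\gamma},
\]
which is uniform in $j$ and tends to $0$ as $M\to\infty$.

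The heart of the argument is the low-frequency integral, and here my strategy is pointwise convergence followed by dominated convergence. For each fixed $\xi$, testing against the continuous linear functional $w \mapsto \int w(t)\, e^{-2\pi i\xi t}\,dt$ (suitably localized) shows that weak convergence $v_j\rightharpoonup 0$ in $L^2(\mathbb{R};X_0)$ forces $\hat v_j(\xi)\rightharpoonup 0$ in $X_0$; the compact embedding $X_0\hookrightarrow X$ then upgrades this to $\|\hat v_j(\xi)\|_X \to 0$ pointwise in $\xi$. Provided one has a $j$-uniform, $\xi$-integrable majorant on $\{|\xi|\le M\}$, dominated convergence yields $\int_{|\xi|\le M}\|\hat v_j(\xi)\|_X^2\,d\xi\to 0$ for each fixed $M$. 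Combining the two estimates via a standard $\varepsilon/2$ argument (choose $M$ large, then $j$ large) finishes the proof.

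The main obstacle is producing that $j$-uniform dominating function in $\xi$. In the typical application the $v_j$ are supported in a common bounded time interval $[a,b]$, and Cauchy--Schwarz immediately yields $\|\hat v_j(\xi)\|_X \le \sqrt{b-a}\,\|v_j\|_{L^2([a,b];X_0)}$, which gives a constant majorant. Without a uniform support condition one must first establish tightness of $(v_j)$ in time, which is not automatic from the two stated bounds and requires additional input; I expect this to be the technical subtlety that any fully rigorous proof must confront, while everything else is essentially bookkeeping on the two explicit hypotheses.
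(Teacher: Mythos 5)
Your proof is correct and is essentially the classical argument: the paper itself gives no proof of this lemma, deferring to Cai and Jiu, who in turn take it from Temam (Theorem III.2.2), and that proof is exactly your frequency-splitting scheme --- extraction of a weak limit, reduction to $v_j \rightharpoonup 0$, the $M^{-2\gamma}$ bound on high frequencies from the weighted hypothesis, and pointwise convergence $\|\hat v_j(\xi)\|_X \to 0$ via the compact embedding followed by dominated convergence on $\{|\xi|\le M\}$. The subtlety you flag at the end is real and not merely technical: as stated, without a common compact support in $t$, the lemma is false. Take $v_j(t)=\varphi(t-j)\,v_0$ for a fixed smooth bump $\varphi$ and $0\ne v_0\in X_0$; both suprema in the hypotheses are finite and independent of $j$ (translation only multiplies $\hat v_j$ by a unimodular factor), yet no subsequence converges strongly in $L^2(\mathbb{R};X)$, since $\|v_j\|_{L^2(\mathbb{R};X)}$ is a nonzero constant while $v_j\rightharpoonup 0$. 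Temam's version of the lemma explicitly assumes that all $v_j$ are supported in a fixed compact set $K\subset\mathbb{R}$, and that hypothesis has been dropped in the statement quoted here; it is precisely what legitimizes both your pointwise weak convergence $\hat v_j(\xi)\rightharpoonup 0$ in $X_0$ (the functional $w\mapsto\int_K \langle w(t),h\rangle e^{-2\pi i\xi t}\,dt$ is bounded on $L^2(\mathbb{R};X_0)$ only after localization) and your constant majorant for dominated convergence. In the paper's application the functions $\tilde u_m$ vanish outside $[0,T]$, so the missing hypothesis holds there; your proof is complete once that support condition is added to the statement.
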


Our main result of this section is given as follows

\begin{theorem} \label{thm1}
Suppose that $\beta \geq 1$, $u_0 \in H$ and $f \in L^2(0,T;H)$. Then for any given $T > 0$, there exists 
a function $u(x,t)$  such that
\begin{equation}
u \in L^2(0,T;V) \cap L^{\infty}(0,T;H) \cap L^{\beta + 1}(0,T;L^{\beta + 1}(\Omega)).
\end{equation}
and $u(x,t)$ will satisfy 
\begin{align} 
   (u',w) &+ 2\mu ((u,w)) + b(u,u,w) + (\vartheta |u|^{\beta -1}u,w)  
          + \mu \int_{\partial \Omega} \alpha (u \cdot \tau) (w \cdot \tau) dS \nonumber  \\&= (f,w)~~~~~ \forall w \in V.
   \end{align}
\end{theorem}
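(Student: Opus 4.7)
The plan is to follow the Faedo--Galerkin scheme, using the special basis $\{v_n\}$ from Lemma \ref{basis}, which already encodes the Navier slip condition, so that no boundary correction is needed in the approximations. Let $V_m=\mathrm{span}\{v_1,\dots,v_m\}$ and seek $u_m(t)=\sum_{k=1}^m g_{km}(t)v_k$ satisfying, for each $k\le m$,
\begin{equation*}
(u_m',v_k)+2\mu((u_m,v_k))+b(u_m,u_m,v_k)+\vartheta(|u_m|^{\beta-1}u_m,v_k)+\mu\!\int_{\partial\Omega}\!\alpha(u_m\cdot\tau)(v_k\cdot\tau)\,dS=(f,v_k),
\end{equation*}
with $u_m(0)$ the $L^2$-projection of $u_0$ onto $V_m$. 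Since the coefficients depend continuously (indeed locally Lipschitz) on $g_{km}$ thanks to $|u|^{\beta-1}u$ being $C^1$ in components when $\beta\geq 1$, Carathéodory/Picard gives local-in-time existence of $g_{km}$.

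Next I would derive the a priori estimates by testing the ODE with $u_m$ itself. The convective term drops out: using $\mathrm{div}\,u_m=0$ and $u_m\cdot\nu=0$, one has $b(u_m,u_m,u_m)=0$. The damping term contributes $\vartheta\|u_m\|_{L^{\beta+1}}^{\beta+1}$, and because $\alpha>0$ the boundary integral $\mu\int_{\partial\Omega}\alpha|u_m\cdot\tau|^2\,dS$ is nonnegative and can be kept on the good side. Together with Korn's inequality (to dominate $\|u_m\|_V^2$ by $2\|D(u_m)\|_{L^2}^2$ plus boundary terms) and Cauchy--Schwarz/Young on $(f,u_m)$, I obtain uniform bounds
\begin{equation*}
\|u_m\|_{L^\infty(0,T;H)}+\|u_m\|_{L^2(0,T;V)}+\|u_m\|_{L^{\beta+1}(0,T;L^{\beta+1}(\Omega))}\le C,
\end{equation*}
which in particular gives global existence of $g_{km}$ on $[0,T]$. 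From these, $|u_m|^{\beta-1}u_m$ is bounded in $L^{(\beta+1)/\beta}(Q_T)$, the operator $A$ estimate \eqref{c1} bounds the linear viscous+boundary term in $L^2(0,T;V')$, and Lemma \ref{lb1} bounds $B(u_m)$ in $L^1(0,T;V')$. This lets me control $u_m'$ in a negative Sobolev space in $t$, which is exactly what feeds into the compactness Lemma \ref{lem1}.

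The main obstacle, as usual, is passing to the limit in the nonlinear terms, and this is where Lemma \ref{lem1} is essential. Applying it with $X_0=V$, $X=H$ and a small $\gamma\in(0,1/4)$ (the fractional time-derivative bound coming from the $u_m'$ estimate above by the standard Lions--Magenes trick of cutting off in time and Fourier-transforming) yields a subsequence, still denoted $u_m$, that converges strongly in $L^2(0,T;H)$ and, along a further subsequence, a.e. in $Q_T$. Extract also weak limits $u_m\rightharpoonup u$ in $L^2(0,T;V)$, weak-$*$ in $L^\infty(0,T;H)$, and in $L^{\beta+1}(Q_T)$. Strong $L^2$ convergence is enough to pass to the limit in $b(u_m,u_m,v)$ for any fixed $v\in V_{m_0}$ in the usual way. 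For the damping term, a.e. convergence of $u_m\to u$ together with the $L^{(\beta+1)/\beta}$ bound on $|u_m|^{\beta-1}u_m$ gives, by Lions' lemma on weak convergence of nonlinear terms, that $|u_m|^{\beta-1}u_m\rightharpoonup |u|^{\beta-1}u$ in $L^{(\beta+1)/\beta}(Q_T)$, which is precisely what is needed to pass to the limit against test functions $v\in V$. The boundary integral passes by the trace theorem since $u_m\to u$ strongly in $L^2(0,T;L^2(\partial\Omega))$ (as a consequence of the $V$-bound and compactness of the trace $V\hookrightarrow L^2(\partial\Omega)$ combined with Lemma \ref{lem1}). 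Finally, since $\bigcup_m V_m$ is dense in $V$, the resulting identity extends from test functions in some $V_{m_0}$ to all $v\in V$, giving the weak formulation and completing the argument.
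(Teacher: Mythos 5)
Your proposal is correct and follows essentially the same route as the paper: Galerkin approximation on the special basis of Lemma \ref{basis}, the energy estimate yielding the $L^\infty(0,T;H)\cap L^2(0,T;V)\cap L^{\beta+1}(Q_T)$ bounds, the Fourier-in-time fractional derivative estimate feeding the Cai--Jiu compactness Lemma \ref{lem1}, and the standard weak/strong limit passage (with Lions' lemma for the damping term and trace compactness for the boundary integral). The only cosmetic difference is that you apply Lemma \ref{lem1} with $X=H$ directly, whereas the paper uses an exhaustion $\Omega_i\subset\Omega$ with $X=L^2(\Omega_i)$; for a bounded domain your choice is equivalent and slightly cleaner.
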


\begin{proof}
We use the Galerkin approximations to prove the theorem. 
It follows from Lemma \ref{basis} that there exists an orthonormal basis for  $\mathcal{W}$. 
 We denote the basis as  $\{ w_i \in  H^2(\Omega)^3 \}$. We note that $\{ w_i\}$ also forms a  basis for $V$.
 For each positive integer $m$, we define an approximate solution $u_m$ to (\ref{wsol}) as follows:
\begin{align}
u_m(t) = \sum_{i=1}^m g_{im}(t)w_i \label{1}
\end{align}
Substituting $u_m$ in (\ref{wsol}), we obtain
\begin{align}
(u_m'(t),w_j) &+ 2\mu ((u_m(t),w_j)) + b(u_m(t),u_m(t),w_j)+(\vartheta |u_m|^{\beta - 1}u_m(t), w_j) \nonumber\\
              &+ \mu \int_{\partial \Omega} \alpha (u_m(t) \cdot \tau) (w_j \cdot \tau) dS =(f(t),w_j) \quad j= 1,...,m, \label{2}\\
u_m(0)& = \sum_{j=1}^m (u_0,w_j)w_j = u_{0m}. \label{3}
\end{align}
Further the equations (\ref{2}) --(\ref{3}) can be rewritten as  
\begin{align}
&\sum_{i=1}^m (w_i,w_j)g_{im}'(t)+ 2 \mu \sum_{i=1}^m ((w_i,w_j))g_{im}(t) + \sum_{i,l=1}^m b(w_i,w_l,w_j)g_{im}(t)g_{lm}(t) \nonumber\\
&+ \sum_{i=1}^m \vartheta |u_m|^{\beta -1}(w_i,w_j)g_{im}(t)+ \mu \int_{\partial \Omega} \alpha (u_m(t) \cdot \tau)(w_j \cdot \tau)dS = (f(t),w_j),\nonumber \\
& j= 1,...,m. \label{4} \\
&u_m(0) = \sum_{j=1}^m (u_0,w_j)w_j = u_{0m}. \label{3'}
\end{align}
The equations (\ref{4})--(\ref{3'}) form a nonlinear system of differential equations in the  function $g_{1m}, ... , g_{mm}$. 
Now we apply Picard's theorem, which ensure us an existence of a unique local solution of (\ref{4})
in some interval $[0,T_m] \subset [0,T]$.

We have a priori estimates on the approximate solutions $u_m$ as follows which ensure us the existence of solution for all $T$.

\begin{lemma}\label{pl1}
Suppose that $u_0 \in H$. Then for any given $T>0$ and any $\beta \geq 1$, we have
\begin{align}
\sup_{0 \leq t \leq T} \|u_m(t)\|_{L^2}^2 + 2\mu \int_0^T \|u_m\|_{H^1}^2 dt + 2\vartheta \int_0^T \|u_m\|_{L^{\beta+1}}^{\beta + 1} dt & \leq \|u_0\|_{L^2}^2 \nonumber  + \frac{2}{\mu} \int_0^T \|f\|_{L^2}^2dt.
\end{align}
\end{lemma}

\begin{proof}
We multiply (\ref{2}) by $g_{jm}(t)$ and add these equation for $j=1,...,m$. We get
\begin{align}
(u_m'(t),& u_m(t))+2\mu \|u_m(t)\|_{H^1}^2 +\vartheta \|u_m\|_{L^{\beta+1}}^{\beta+1}+ 
\mu \int_{\partial \Omega} \alpha (u_m(t) \cdot \tau)(u_m(t) \cdot \tau)dS \nonumber \\
& = (f(t),u_m(t)),\label{5}
\end{align}
where we have used the fact that $b(u,v,v)=0 ~ , ~ \forall v \in H_0^1(\Omega)~,~ u \in V .$
\begin{align}
\frac{1}{2} &\frac{d}{dt} \|u_m(t)\|_{L^2}^2 + 2\mu \|u_m(t)\|_{H^1}^2  +\vartheta \|u_m\|_{L^{\beta+1}}^{\beta+1}+ \mu \int_{\partial \Omega} \alpha (u_m(t) \cdot \tau)^2 ds  \nonumber \\
&= (f(t),u_m(t)). 
\label{6}
\end{align}
The right-hand side of (\ref{6}) is estimated as by 
\begin{align}
 | (f(t),u_m(t))| &\leq \|f(t)\|_{V'} \|u_m(t)\|_{H^1}\nonumber \\
 &\leq \mu \|u_m(t)\|_{H^1}^2 + \frac{1}{\mu} \|f(t)\|_{V'}^2.\label{e1}
\end{align}
Using (\ref{e1}) in  (\ref{6}), we obtain 
\begin{align}
\frac{1}{2} \frac{d}{dt} \|u_m(t)\|_{L^2}^2 + \mu \|u_m(t)\|_{H^1}^2 +\vartheta \|u_m\|_{L^{\beta+1}}^{\beta+1} \leq \frac{2}{\mu} \|f(t)\|_{V'}^2. \label{7}
\end{align}
We integrate (\ref{7}) from $0$ to $T$ and obtain
\begin{align}
\sup_{0 \leq t \leq T} \|u_m(t)\|_{L^2}^2 + 2\mu \int_0^T \|u_m\|_{H^1}^2 dt + 2\vartheta \int_0^T \|u_m\|_{L^{\beta+1}}^{\beta + 1} dt & \leq \|u_0\|_{L^2}^2 \nonumber  + \frac{2}{\mu} \int_0^T \|f\|_{L^2}^2dt.
\end{align}
The proof of Lemma \ref{pl1} is finished.
\end{proof}
 
 Applying Lemma \ref{pl1}, we obtain the global existence of the approximate solutions $u_m \in 
  L^2(0,T;V) \cap L^{\infty}(0,T;H) \cap L^{\beta + 1}(0,T;L^{\beta + 1}(\Omega)) $. Next, we will use Lemma \ref{lem1}
to prove the strong convergence of $u_m$ or its subsequence in $L^2 \cap L^{\beta}([0,T] \times \Omega)$.
 Let $\tilde{u}_m$ denote the function from $\mathbb{R}$ to $V$, define as follows
 
 \[ \tilde{u}_m =
  \begin{cases}
      u_m     & \quad \text{on } [0,T]\\
      0  & \quad \text{otherwise.} 
  \end{cases}
\] 
  Similarly, we define $g_{im}(t)$ to $\mathbb{R}$ by defining 
\[ \tilde{g}_{im}(t) =
  \begin{cases}
      g_{im}    & \quad \text{on } [0,T]\\
      0  & \quad \text{otherwise.} 
  \end{cases}
\]  
 The Fourier transformations on time variable of $\tilde{u}_m$ and $\tilde{g}_{im}$ are denoted by
$\hat{\tilde{u}}_m$ and $\hat{\tilde{g}}_{im}$ respectively. We want to show that
\begin{equation}
\int_{-\infty}^{+\infty} |\xi|^{2\gamma}\|\hat{\tilde{u}}_m(\xi)\|_{L^2}^2 d\xi \leq C ,~~ \text{for some } \gamma > 0 \label{8}
\end{equation}
where $C$ is some positive constant.
Note that approximate solutions $\tilde{u}_m$ satisfy
\begin{align}
\frac{d}{dt}(\hat{u}_m,w_j) = (\hat{f}_m,w_j) &+ (\vartheta |\hat{u}_m|^{\beta -1}\hat{u}_m(t),w_j)+
(u_{0m},w_j)\delta_0 \nonumber\\
                                              &-(u_m(T),w_j)\delta_T, ~~~ j=1,...,m \label{9}
\end{align}
where $\delta_0,\delta_T$ are Dirac distributions at $0$ and $T$ and $f_m = f - \mu Au_m - Bu_m,$ $\hat{f}_m = f ~~\text{on} ~~ [0,T], ~~ 0 ~~\text{outside this interval}$. we already define $A$ and $B$ in (\ref{p12}) and (\ref{p11}) 
respectively.

Taking the Fourier transformation about the time variable, (\ref{9}) gives
\begin{align}
2\pi i \xi(\hat{\tilde{u}}_m,w_j) = (\hat{\tilde{f}}_m,w_j) &+ \vartheta (|\tilde{u}_m\hat{|^{\beta - 1} 
\tilde{u}}_{m}(\xi) ,w_j)+(u_{0m},w_j)\nonumber\\ 
&- (u_m(T),w_j)\exp (-2\pi i T \xi), \label{10}
\end{align} 
where $\hat{\tilde{f}}_m$ denotes the Fourier transformation of $\tilde{f}_m$.

We multiply (\ref{10}) by $\hat{\tilde{g}}_{jm}(\xi)$ and add the resulting equations for $j=1,...,m$, we get
\begin{align}
2\pi i \xi \|\hat{\tilde{u}}_m(\xi)\|_{L^2}^2 = (\hat{\tilde{f}}_m,\hat{\tilde{u}}_m) &+  \vartheta (|\tilde{u}_m\hat{|^{\beta - 1} \tilde{u}}_{m}(\xi) ,\hat{\tilde{u}}_m) + (u_{0m},\hat{\tilde{u}}_m) \nonumber\\
&+ (u_m(T),\hat{\tilde{u}}_m)\exp (-2\pi i T \xi). \label{11}
\end{align} 
Because of (\ref{c2}) and (\ref{c1}), for any $v \in L^2(0,T;V) \cap L^{\beta + 1}(0,T;L^{\beta + 1})$, we have
\begin{align*}
(f_m(t),v) &= (f,v)-\mu(Au_m,v)-(Bu_m,v) \nonumber \\ 
           &\leq C (\|f(t)\|_{V'}+\mu \|u_m(t)\|_{H^1}+c_1\|u_m(t)\|_{H^1}^2)\|v\|_{H^1}.
\end{align*}
It follows that for any given $T>0$
\begin{align*}
\int_0^T \|f_m(t)\|_{V'} dt \leq \int_0^T C (\|f(t)\|_{V'}+\mu \|u_m(t)\|_{H^1}+c_1\|u_m(t)\|_{H^1}^2) \leq C,
\end{align*}
hence
\begin{equation}
\sup_{\xi \in \mathbb{R}} \|\hat{\tilde{f}}_m(\xi)\|_{V'} \leq \int_0^T \|f_m(t)\|_{V'} dt \leq C. \label{12}
\end{equation}
Also, it follows from Lemma \ref{pl1} that
\begin{align*}
\int_0^T \||u_m|^{\beta - 1}u_m\|_{\frac{\beta + 1}{\beta}} dt \leq \int_0^T \|u_m\|_{\beta + 1}^{\beta} dt \leq C,
\end{align*}
which implies that
\begin{equation}
\sup_{\xi \in \mathbb{R}} \||u_m\hat{|^{\beta -1}}u(\xi)\|_{\frac{\beta + 1}{\beta}} \leq C. \label{13}
\end{equation}
From Lemma \ref{pl1}, we get
\begin{equation}
\|u_m(0)\|_{L^2} \leq C, ~~~~~~ \|u_m(T)\|_{L^2} \leq C. \label{14}
\end{equation}
We deduce from (\ref{11})--(\ref{14}) that
\begin{align*}
|\xi|\|\hat{\tilde{u}}_m(\xi)\|_{L^2}^2 \leq C \big( \|\hat{\tilde{u}}_m(\xi)\|_{H^1} + \|\hat{\tilde{u}}_m(\xi)\|_{\beta +1} \big).
\end{align*}
For any $\gamma$ fixed, $0 < \gamma < \frac{1}{4}$, we observe that
\begin{equation*}
|\xi|^{2\gamma} \leq C \frac{1+|\xi|}{1+|\xi|^{1-2\gamma}}, ~~~~~ \forall \xi \in \mathbb{R}.
\end{equation*}
Thus
\begin{align}
\int_{-\infty}^{+\infty} |\xi|^{2\gamma} \|\hat{\tilde{u}}_m(\xi)\|_{L^2}^2 d\xi 
&\leq C\int_{-\infty}^{+\infty} \frac{1+|\xi|}{1+|\xi|^{1-2\gamma}} \|\hat{\tilde{u}}_m(\xi)\|_{L^2}^2 d\xi \nonumber \\
& \leq C\int_{-\infty}^{+\infty} \|\hat{\tilde{u}}_m(\xi)\|_{L^2}^2 d\xi  + C \int_{-\infty}^{+\infty}
  \frac{\|\hat{\tilde{u}}_m(\xi)\|_{H^1}}{1+|\xi|^{1-2\gamma}} d\xi \nonumber \\
&   + C\int_{-\infty}^{+\infty}
  \frac{\|\hat{\tilde{u}}_m(\xi)\|_{\beta +1}}{1+|\xi|^{1-2\gamma}} d\xi. \label{15}
\end{align}
Using the Parseval equality and Lemma \ref{pl1}, the first integral on the right hand side of (\ref{15}) is bounded
as $m \rightarrow \infty$. By the Schwartz inequality, the Parseval equality and Lemma \ref{pl1}, we get
\begin{align}
\int_{-\infty}^{+\infty} \frac{\|\hat{\tilde{u}}_m(\xi)\|_{H^1}}{1+|\xi|^{1-2\gamma}} d\xi
\leq \Bigg( \int_{-\infty}^{+\infty} \frac{d\xi}{(1+|\xi|^{1-2\gamma})^2}  \Bigg)^{\frac{1}{2}} 
     \Bigg( \int_0^T \|u_m(\xi)\|_{H^1}^2 \Bigg)^{\frac{1}{2}} \leq C \label{16}
\end{align}
for $0 < \gamma < \frac{1}{4}$.

Similarly, when $0 < \gamma < \frac{1}{2(\beta + 1)}$, we get
\begin{align}
\int_{-\infty}^{+\infty} \frac{\|\hat{\tilde{u}}_m(\xi)\|_{\beta +1}}{1+|\xi|^{1-2\gamma}} d\xi
& \leq \Bigg( \int_{-\infty}^{+\infty} \frac{d\xi}{(1+|\xi|^{1-2\gamma})^{\frac{\beta+1}{\beta}}}  \Bigg)^{\frac{\beta}{\beta + 1}} \Bigg( \int_{-\infty}^{+\infty} \|\hat{\tilde{u}}_m(\xi)\|_{\beta +1}^{\beta + 1} d\xi  \Bigg)^{\frac{1}{\beta+1}} \nonumber \\
& \leq C\Bigg( \int_{-\infty}^{+\infty} \|\hat{\tilde{u}}_m(\xi)\|_{\beta +1}^{\frac{\beta + 1}{\beta}} d\xi  \Bigg)^{\frac{\beta}{\beta+1}} \nonumber \\
& \leq CT^{\frac{\beta-1}{\beta+1}} \Bigg( \int_0^T \|u_m(\xi)\|_{\beta+1}^{\beta+1} d\xi   \Bigg)^{\frac{1}{\beta}} \label{17}
\end{align} 
It follows from (\ref{15}) that
\begin{equation}
\int_{-\infty}^{+\infty} |\xi|^{2\gamma} \|\hat{\tilde{u}}_m(\xi)\|_{L^2}^2 d\xi \leq C. \label{18}
\end{equation}
From lemma \ref{pl1}, we can say that, there exists a function $u(x,t)$ such that
\begin{equation}
u \in L^2(0,T;V) \cap L^{\infty}(0,T;H) \cap L^{\beta + 1}(0,T;L^{\beta + 1}(\Omega)) \label{19}
\end{equation}
and there exists a subsequence of $\{u_m\}_{m=1}^{\infty}$, still denoted by $u_m$, such that 
\begin{center}
$u_m \rightarrow u$ weakly in $L^2(0,T;V)$, \\[0.2cm]
$u_m \rightarrow u$ weak-star topology of  $L^{\infty}(0,T;H)$,\\[0.2cm]
and $u_m \rightarrow u$ weakly in $L^{\beta + 1}(0,T;L^{\beta + 1}(\Omega))$. 
\end{center}
Moreover, we choose $\Omega_1 \subset \Omega_2 \subset \Omega_3 \subset \cdots$ with smooth boundary, satisfying
$\cup_{i=1}^{\infty} \Omega_i = \Omega$. For any fixed $i=1,2,...,$ we take $X_0=V,~~X=L^2(\Omega_i)$ in Lemma \ref{lem1}. 
From Lemma \ref{lem1}, Lemma \ref{pl1} and (\ref{18}), we obtain that there exists a subsequence 
of $\{u_m\}_{m=1}^{\infty}$, still denoted by itself, such that $u_m \rightarrow u$ strongly in $L^2(0,T;L^2(\Omega_i))$. 
Also, $\int_0^T \int_{\Omega} |u_m|^{\beta + 1} dx~dt \leq C$, we obtain that
$u_{m_j} \rightarrow u$ strongly in $L^p(0,T;L_{\text{loc}}^p(\Omega))$ for $2 \leq p < \beta + 1$ if $\beta > 1$.

Let $\phi$ be a continuously differentiable function on $[0,T]$ with $\phi(t)=0$. We multiply (\ref{2}) by $\phi(t)$, and 
then integrate by parts. We obtain
\begin{align}
-&\int_0^T (u_m(t),\phi'(t)w_j)dt + \int_0^T \phi(t) \bigg( 2\mu ((u_m(t),w_j)) + b(u_m(t),u_m(t),w_j) \nonumber\\ 
&+ (\vartheta |u_m|^{\beta - 1}u_m(t), w_j)+ \mu \int_{\partial \Omega} \alpha (u_m(t) \cdot \tau)(w_j \cdot \tau) dS
- (f(t),w_j) \bigg)dt\nonumber\\  
&= (u_{0m},w_j)\phi(0) \hspace{1cm} j=1,...m. \label{20}
\end{align}
We already have $u_m \rightarrow u$ weakly in $L^2(0,T;H)$. Let $v_m = Tu_m$ and $v=Tu$, where $T:W^{1,2}
(\Omega) \rightarrow L^2(\partial\Omega)$ trace operator. Since $T$ is linear and continuous $v_m \rightarrow v$
weakly in $L^2(0,T;H)$. Also we know that $W^{1-1/p,p}(\Omega)$ compactly embedded in $L^p(\partial \Omega)$. 
Then the weak convergence of $v_m$ imply strong convergence of $v_m.$
By compactness result \cite[Theorem III.2.2]{tem79} and \cite[Theorem II.6.2]{nec12}, we have $(u_m(t) \cdot \tau)
 \rightarrow (u \cdot \tau)$  strongly in $L^2( \partial \Omega)$. Passing limit to the (\ref{20}), we obtain the 
 equation
\begin{align}
-&\int_0^T (u(t),\phi'(t)v)dt + \int_0^T \phi(t) \bigg( 2\mu ((u(t),v)) + b(u(t),u(t),v) \nonumber\\ 
&+ (\vartheta |u|^{\beta - 1}u(t), v)+ \mu \int_{\partial \Omega} \alpha (u(t) \cdot \tau)(v \cdot \tau) dS
- (f(t),v) \bigg)\nonumber\\  
&= (u_{0},v)\phi(0), \label{21}
\end{align} 
holds for $v=w_1,w_2,...$ by linearity this equation holds for $v=~~$any finite linear combination of the $w_j$, and by
continuity argument (\ref{21}) is true for any $v \in V$. So, $u(x,t)$ is solution of damped Navier-Stokes system. The proof of Theorem (\ref{thm1}) is finished.  
\end{proof}

\section{Regularity of solution} \label{exstrong}
In this section we discuss about the regularity of solution obtained in section \ref{weak} of damped Navier-Stokes system (\ref{be})--(\ref{nosl}).

\begin{theorem} \label{thm2}
Let $u_0 \in \mathcal{W}$, $f \in L^{\infty}(0,T;H)$, $f' \in L^1(0,T;H)$ and if $\beta \geq 3$ and $\mu$ is large enough
or if $f$ and $u_0$ 
are small enough, then the solution $u$ from Theorem \ref{thm1} will satisfy
\begin{equation}
\partial_t u \in L^2(0,T;V) \cap L^{\infty}(0,T;H). \label{22}
\end{equation}
\end{theorem}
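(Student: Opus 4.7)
The plan is to recycle the Galerkin sequence $u_m = \sum_{i=1}^m g_{im}(t) w_i$ built in Theorem \ref{thm1} and upgrade the \emph{a priori} bounds to its time derivative. Under the current hypotheses each coefficient $g_{im}$ is of class $W^{2,1}$ in $t$, so differentiating the Galerkin identity (\ref{2}) in $t$, multiplying by $g_{jm}'(t)$ and summing over $j=1,\dots,m$, together with the cancellation $b(u_m,u_m',u_m')=0$, produces the energy identity
\begin{align*}
\tfrac{1}{2}\tfrac{d}{dt}\|u_m'\|_{L^2}^2 &+ 2\mu\|u_m'\|_{H^1}^2 + \mu\!\int_{\partial\Omega}\alpha(u_m'\cdot\tau)^2\,dS \\
&+ b(u_m',u_m,u_m') + \vartheta\bigl(\partial_t(|u_m|^{\beta-1}u_m),\,u_m'\bigr) = (f',u_m').
\end{align*}
A direct calculation yields $\bigl(\partial_t(|u_m|^{\beta-1}u_m),u_m'\bigr) \geq \int_\Omega |u_m|^{\beta-1}|u_m'|^2\,dx \geq 0$; the boundary integral is non-negative since $\alpha>0$; and Lemma \ref{pre} bounds
\begin{equation*}
|b(u_m',u_m,u_m')| \leq C\|u_m'\|_{L^2}^{1/2}\|u_m'\|_{H^1}^{3/2}\|u_m\|_{H^1} \leq \tfrac{\mu}{2}\|u_m'\|_{H^1}^2 + C\|u_m\|_{H^1}^4\|u_m'\|_{L^2}^2.
\end{equation*}
Hence a Gronwall argument would close the estimate, provided that a uniform $L^\infty(0,T;V)$ bound on $u_m$ and a uniform bound on $\|u_m'(0)\|_{L^2}$ are available.

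The main obstacle is the intermediate $H^1$-estimate, and it is exactly here that the hypothesis ``$\beta\geq 3$ with $\mu$ large, or $f, u_0$ small'' enters. I will obtain $\|u_m\|_{L^\infty(0,T;V)} \leq C$ by testing the Galerkin equation with the element of $\mathrm{span}\{w_1,\dots,w_m\}$ that realises $-\Delta u_m$ in that subspace (using the $H^3$-regularity of the basis from Lemma \ref{basis}). The viscous term then produces $\mu\|\Delta u_m\|_{L^2}^2$ modulo a controllable boundary correction handled by the trace inequality, while the damping yields the favourable contribution
\begin{equation*}
\vartheta\,\bigl(|u_m|^{\beta-1}u_m,\,-\Delta u_m\bigr) \geq c_\beta\,\vartheta\int_\Omega |u_m|^{\beta-1}|\nabla u_m|^2\,dx.
\end{equation*}
The convection is estimated via $|(B(u_m,u_m),-\Delta u_m)| \leq C\|\,|u_m|\,|\nabla u_m|\,\|_{L^2}\|\Delta u_m\|_{L^2}$; when $\beta=3$ the integrand $|u_m|^2|\nabla u_m|^2$ is directly controlled by the damping term, and when $\beta>3$ it is reached by interpolation using the $L^\infty(0,T;H)$ bound from Lemma \ref{pl1}. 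Young's inequality then absorbs the convection into $\mu\|\Delta u_m\|_{L^2}^2$ plus the damping gradient term. Under the alternative hypothesis, large $\mu$ or small data permit the same absorption without invoking the damping.

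With $\|u_m\|_{L^\infty(0,T;V)}\leq C$ in hand, I initialise the energy identity by evaluating the Galerkin equation at $t=0$: since $u_0 \in \mathcal{W}$ gives $\mu\Delta u_0 \in H$, and the embedding $H^2(\Omega)\hookrightarrow L^\infty(\Omega)$ in three dimensions gives $B(u_0,u_0),\,|u_0|^{\beta-1}u_0 \in H$, while $f(0)\in H$ follows from $f \in L^\infty(0,T;H)$ with $f'\in L^1(0,T;H)$ (so $f\in C([0,T];H)$), one obtains $\|u_m'(0)\|_{L^2}\leq C$ uniformly in $m$. Gronwall's inequality then yields
\begin{equation*}
\sup_{0\leq t\leq T}\|u_m'(t)\|_{L^2}^2 + \mu\int_0^T\|u_m'(t)\|_{H^1}^2\,dt \leq C,
\end{equation*}
and extracting a subsequence converging weak-$*$ in $L^\infty(0,T;H)$ and weakly in $L^2(0,T;V)$ identifies the limit with $\partial_t u$, yielding $\partial_t u\in L^2(0,T;V)\cap L^\infty(0,T;H)$ as required.
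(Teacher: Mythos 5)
Your overall architecture coincides with the paper's: differentiate the Galerkin identity in $t$, test with $u_m'$, use $b(u_m,u_m',u_m')=0$, observe that the derivative of the damping term contributes non-negatively, bound $\|u_m'(0)\|_{L^2}$ by evaluating the equation at $t=0$ (using $u_{0m}\to u_0$ in $H^2$ and $H^2\hookrightarrow L^\infty$), and close with Gronwall before passing to the limit. Where you diverge is the intermediate $H^1$-in-space bound on $u_m$ needed to control $b(u_m',u_m,u_m')$. The paper never tests with $-\Delta u_m$: it returns to the \emph{undifferentiated} energy identity (\ref{5}), reads off $\mu\|u_m(t)\|_{H^1}^2\le \frac{1}{\mu}\|f\|_{V'}^2+C\|u_m\|_{L^2}\|u_m'(t)\|_{L^2}+\dots$, so that $\|u_m\|_{H^1}$ is slaved to $\|u_m'\|_{L^2}$, and runs a continuation argument on the quantity $2\mu-c_3\|u_m(t)\|_{H^1}$, which stays positive up to time $T$ precisely under the ``$\mu$ large or data small'' hypothesis. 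Your Young-inequality treatment of the trilinear term (absorbing $\frac{\mu}{2}\|u_m'\|_{H^1}^2$ and leaving $C\|u_m\|_{H^1}^4\|u_m'\|_{L^2}^2$) is in fact sharper than the paper's crude bound $c_3\|u_m'\|_{H^1}^2\|u_m\|_{H^1}$, but it forces you to supply $\int_0^T\|u_m\|_{H^1}^4\,dt<\infty$, which Lemma \ref{pl1} does not give; hence your need for the separate $L^\infty(0,T;V)$ estimate.

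That separate estimate is where your proposal has a genuine gap. Testing with (the Galerkin projection of) $-\Delta u_m$ is the standard Cai--Jiu route for the \emph{no-slip} problem, but under the Navier slip condition the tangential trace of $u_m$ does not vanish, so every integration by parts produces nontrivial boundary integrals: the viscous term yields $\int_{\partial\Omega}\partial_\nu u_m\cdot(\cdot)$ terms that must be rewritten via the slip condition $2D(u)\nu\cdot\tau+\alpha u\cdot\tau=0$ and curvature terms of $\partial\Omega$, and, more seriously, the damping term gives
\begin{equation*}
\vartheta\bigl(|u_m|^{\beta-1}u_m,-\Delta u_m\bigr)=\vartheta\int_\Omega\nabla\bigl(|u_m|^{\beta-1}u_m\bigr):\nabla u_m\,dx-\vartheta\int_{\partial\Omega}|u_m|^{\beta-1}\,u_m\cdot\partial_\nu u_m\,dS,
\end{equation*}
whose boundary contribution involves $|u_m|^{\beta+1}$-type traces that are not controlled by the quantities you have; calling this ``a controllable boundary correction handled by the trace inequality'' does not dispose of it. In addition, $-\Delta u_m$ is an admissible test function only after projecting onto $\mathrm{span}\{w_1,\dots,w_m\}$, and the projection commutes with the estimate only if the basis diagonalises the relevant Stokes-type operator with Navier conditions, which you would need to verify from Lemma \ref{basis}. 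Finally, your interpolation claim for $3<\beta<7/2$ needs to be written out (the classical $-\Delta$-testing argument closes unconditionally only for $\beta\ge 7/2$; for $3\le\beta<7/2$ one needs the largeness of $\mu\vartheta$ exactly as in your $\beta=3$ case). Either carry out the $-\Delta u_m$ computation with all boundary terms, or replace this step by the paper's continuation argument, which avoids second-order testing altogether.
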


\begin{proof}
 We  show that the
approximate solution define in (\ref{1}) also satisfies a priori estimate:
\begin{equation}
u_m' \in L^2(0,T;V) \cap L^{\infty}(0,T;H). \label{23}
\end{equation}
In the limit (\ref{23}) implies (\ref{22}).

Since $u_0 \in \mathcal{W}$, we choose $u_{0m}$ as the orthogonal projection in $\mathcal{W}$ of $u_0$ onto the space 
spanned by $w_1,...,w_m$, then
\begin{equation}
u_{0m} \to u_0 ~~~\text{in}~~~ H^2(\Omega),~~~~\text{and}~~~~~ \|u_{0m}\|_{H^2} \leq \|u_0\|_{H^2}. \label{24}  
\end{equation} 

\begin{lemma}\label{t2l1}
Let the hypotheses of the Theorem \ref{thm2} hold and $u_m$ be the approximate solution defined 
by (\ref{1}). Then $\{u_m'(0)\}$ belongs to bounded subset of $H$.
\end{lemma}

\begin{proof}
we multiply (\ref{2}) by $g_{jm}'(t)$ and  then add resulting equation for $j=1,...,m$. We obtain
\begin{align}
\|u_m'(t)\|_{L^2}^2 &+ 2 \mu ((u_m(t),u_m'(t))) + b(u_m(t),u_m(t),u_m'(t))+(\vartheta |u_m|^{\beta - 1}u_m(t), u_m'(t))\nonumber \\
                    &+\mu \int_{\partial \Omega} \alpha (u_m(t) \cdot \tau)(u_m'(t) \cdot \tau) dS = (f(t),u_m'(t)). \label{25}
\end{align}
Considering time $t=0$, we get
\begin{align}
\|u_m'(0)\|_{L^2}^2 &+ 2 \mu ((u_m(0),u_m'(0))) + b(u_m(0),u_m(0),u_m'(0))+(\vartheta |u_m(0)|^{\beta - 1}u_m(0), u_m'(0)) \nonumber\\
                    &+ \mu \int_{\partial \Omega} \alpha (u_m(0) \cdot \tau)(u_m'(0) \cdot \tau) dS = (f(0),u_m'(0)). \label{26}
\end{align}
We note that  
\begin{align}
 &2 \mu ((u_m(0),u_m'(0))) + \mu \int_{\partial \Omega} \alpha (u_m(0) \cdot \tau) 
(u_m'(0) \cdot \tau) dS \nonumber\\
& = -\mu \int_{\Omega} \Delta u_m(0) u_m'(0)dx \nonumber\\
 & = (-\mu \Delta u_m(0), u_m'(0)).\label{27}
\end{align}
From the definition of $w_j$ in $u_m$, we get the following estimate
\begin{align}
 \|\Delta u_m(0)\|_{L^2}& \leq c_1 \|u_m(0)\|_{H^2} \nonumber \\&\leq c_1 \|u(0)\|_{H^2}\label{28}
\end{align}
 for some positive constant $c_1.$
Moreover,for some positive constant $c_2$ we get from Lemma \ref{pre} is that
\begin{align}
|b(u_m(0),u_m(0),u_m'(0))| \leq c_2 \|u_m(0)\|_{H^2}^2\|u_m(0)\|_{L^2}.
\end{align} 
Using (\ref{27}) and (\ref{28}) in (\ref{26}), we obtain
\begin{align*}
\|u_m'(0)\|_{L^2}^2 &= (f(0),u_m'(0)) + \mu (\Delta u_m(0), u_m'(0)) - b(u_m(0),u_m(0),u_m'(0)) \\
                    & \hspace{1.5cm}-(\vartheta |u_m|^{\beta - 1}u_m(0), u_m'(0)) \\
& \leq \bigg (\|f(0)\|_{L^2} + \mu c_1 \|u_m(0)\|_{H^2} + c_2\|u_m(0)\|_{L^2}^2 \\
& \hspace{1.5cm}+\vartheta |u_m|^{\beta - 1}\|u_m(0)\|_{L^2} \bigg ) \|u_m'(0)\|_{L^2}. 
\end{align*}
It follows that 
\begin{align}
 \|u_m'(0)\|_{L^2} &\leq  \|f(0)\|_{L^2} + \mu c_1 \|u_m(0)\|_{H^2} + c_2\|u_m(0)\|_{L^2}^2 +\vartheta |u_m|^{\beta - 1}\|u_m(0)\|_{L^2} \nonumber\\
 &=d_1,\label{29}
\end{align}
where $d_1$ is a finite positive constant. Hence $\{u_m'(0)\}$ belongs to bounded subset of $ H.$
The proof of Lemma \ref{t2l1} is finished.
\end{proof}

\begin{lemma} \label{t2l2}
Let the hypotheses of the Theorem \ref{thm2} hold and $u_m$ is the approximate solution defined by (\ref{1}). 
Then $\{u_m'\}$ belongs to a bounded subset of $L^{\infty}(0,T;H)$ and $L^2(0,T;V)$.
\end{lemma}

\begin{proof}
As $f' \in L^1(0,T;H)$, we differentiate (\ref{2}) in the $t$ variable, we obtain
\begin{align}
& (u_m''(t),w_j) + 2\mu ((u_m'(t),w_j)) + b(u_m'(t),u_m(t),w_j) + + b(u_m(t),u_m'(t), w_j) \nonumber \\
&  + (\vartheta |u_m|^{\beta - 1}u_m'(t),w_j)+ (\vartheta (\beta - 1) |u_m|^{\beta - 3}(u_m \cdot u_m')u_m(t),w_j)\nonumber \\
&+ \mu \int_{\partial \Omega} \alpha (u_m'(t) \cdot \tau)(w_j \cdot \tau) dS = (f'(t),w_j)  \quad j= 1,..., m. \label{30}
\end{align}
We multiply (\ref{30}) by $g_{jm}'(t)$ and adding these equations for $j=1,...,m$, we obtain
\begin{align}
&\frac{1}{2}\frac{d}{dt}\|u_m'(t)\|_{L^2}^2+2\mu \|u_m'(t)\|_{H^1}^2 + b(u_m'(t),u_m(t),u_m'(t))+ (\vartheta |u_m|^{\beta - 1}u_m'(t),u_m'(t))\nonumber  \\
&\hspace{.5cm}+(\vartheta (\beta - 1) |u_m|^{\beta - 3}(u_m \cdot u_m')u_m(t),u_m'(t))+
\mu \int_{\partial \Omega} \alpha (u_m'(t) \cdot \tau)^2dS \nonumber  \\
& \hspace{.5cm}= (f'(t),u_m'(t)). \label{31}
\end{align}
(\ref{31}) can be written as follow
\begin{align}
&\frac{1}{2}\frac{d}{dt}\|u_m'(t)\|_{L^2}^2+2\mu \|u_m'(t)\|_{H^1}^2 + b(u_m'(t),u_m(t),u_m'(t))+ \vartheta |u_m|^{\beta - 1}\|u_m'(t)\|_{L^2}^2\nonumber  \\
&\hspace{.5cm}+\vartheta (\beta - 1) (|u_m|^{\beta - 3}(u_m \cdot u_m')u_m(t),u_m'(t))+
\mu \int_{\partial \Omega} \alpha (u_m'(t) \cdot \tau)^2dS \nonumber  \\
& \hspace{.5cm}= (f'(t),u_m'(t)). \label{32}
\end{align}
Note that $\vartheta (\beta - 1) (|u_m|^{\beta - 3}(u_m \cdot u_m')u_m(t),u_m'(t)) \geq 0$ when $\beta \geq 3$. Since
\begin{align*}
& \vartheta (\beta - 1) (|u_m|^{\beta - 3}(u_m \cdot u_m')u_m(t),u_m'(t)) \\
& = \vartheta (\beta - 1) \int_{\Omega} (|u_m|^{\beta - 3}(u_m \cdot u_m') u_m(t) \cdot u_m'(t)) dx \\
& = \vartheta (\beta - 1)  \int_{\Omega} |u_m|^{\beta - 3} (u_m(t) \cdot u_m'(t))^2 dx
\end{align*} 
Now (\ref{32}) becomes
\begin{align}
&\frac{1}{2}\frac{d}{dt}\|u_m'(t)\|_{L^2}^2+2\mu \|u_m'(t)\|_{H^1}^2 + b(u_m'(t),u_m(t),u_m'(t)) 
 \leq (f'(t),u_m'(t)) \label{33}
\end{align}
It follows from (\ref{p2}) that 
\begin{align*}
|b(u_m'(t),u_m(t),u_m'(t))|  
&\leq c_3 \|u_m'(t)\|_{H^1}\hspace{.1cm} \|u_m(t)\|_{H^1}\hspace{.1cm}\|u_m'(t)\|_{H^1}\\
&= c_3\|u_m'(t)\|_{H^1}^2\|u_m(t)\|_{H^1}. 
\end{align*}
Using the above estimates in (\ref{33}), we get
\begin{align}
\frac{d}{dt}\|u_m'(t)\|_{L^2}^2+2(2\mu -c_3\|u_m(t)\|_{H^1})\|u_m'(t)\|_{H^1}^2 \leq 2(f'(t),u_m'(t)). \label{34}
\end{align}
From (\ref{5}) we get
\begin{align}
\mu \|u_m(t)\|_{H^1}^2  & \leq \frac{1}{\mu} \|f(t)\|_{V'}^2 - (u_m(t),u_m'(t)) - \vartheta \|u_m\|_{L^{\beta+1}}^{\beta+1} \nonumber \\
                  & \leq \frac{1}{\mu} \|f(t)\|_{V'}^2 + 2\|u_m(t)\|_{L^2}\hspace{.1cm}\|u_m'(t)\|_{L^2} +\vartheta \|u_m\|_{L^{\beta+1}}^{\beta+1} \nonumber \\
                  & \leq \frac{d_2}{\mu} + 2\Bigg (\|u_0\|_{L^2}^2+\frac{Td_2}{\mu}\Bigg)^{\frac{1}{2}} \|u_m'(t)\|_{L^2}+\vartheta \|u_m\|_{L^{\beta+1}}^{\beta+1}, \label{35}
\end{align}
where $d_2=\|f\|_{L^{\infty}(0,T;V')}^2.$ 
So,
\begin{equation}
\mu \|u_m(0)\|_{H^1}^2 \leq \frac{d_2}{\mu} + 2 (\|u_0\|_{L^2}^2+\frac{Td_2}{\mu})^{\frac{1}{2}} d_1 +\vartheta \|u_0\|_{L^{\beta+1}}^{\beta+1} = d_3. \label{36}
\end{equation}
For $\mu$ is large enough or $f$ and $u_0$ 
are small enough, we define $d_4$ such that 
\begin{equation}
d_4 = \frac{d_2}{\mu} + (1+d_1^2)(\|u_0\|_{L^2}^2+\frac{Td_2}{\mu})^{\frac{1}{2}} \exp(\int_0^T\|f'(s)\|_{L^2}ds)+\vartheta \|u_m\|_{L^{\beta+1}}^{\beta+1}
< \frac{\mu^3}{c_3^2} \label{37}
\end{equation}
for some positive constant $c_3.$ Then  $$d_3 \leq d_4.$$
Thus from (\ref{36})-(\ref{37}), it follows that 
\begin{equation}
\mu \|u_m(0)\|_{H^1}^2 \leq d_3 \leq d_4 < \frac{\mu^3}{c_3^2}, \nonumber
\end{equation}
and
\begin{align*}
 &~~~~\|u_m(0)\|_{H^1}^2 < \frac{\mu^2}{c_3^2} \\
&\Rightarrow \mu^2 > c_3^2 \|u_m(0)\|_{H^1}^2 \\
&\Rightarrow 2\mu - c_3 \|u_m(0)\|_{H^1} >0 .
\end{align*}
That is  $2\mu - c_3 \|u_m(t)\|_{H^1}>0, ~~t \in I$, where $I$ is an  interval containing $0$. Let $T_m$ be the the first time $t$  such that
$t \leq T$ and 
$$2\mu - c_3 \|u_m(T_m)\|_{H^1}=0.$$
If this is not the case, then we must have $T_m=T$. 
\begin{equation}
2\mu - c_3 \|u_m(t)\|_{H^1} \geq 0, \quad 0 \leq t \leq T_m. \label{38} 
\end{equation} 
Now from (\ref{34}) using (\ref{38}) we obtain,
\begin{equation}
\frac{d}{dt}\|u_m'(t)\|_{L^2}^2 \leq 2 \|f'(t)\|_{L^2}\hspace{.1cm}\|u_m'(t)\|_{L^2}. \nonumber
\end{equation}Further, we have the following estimate
\begin{equation}
\frac{d}{dt}(1+\|u_m'(t)\|_{L^2}^2) \leq \|f'(t)\|_{L^2} (1+\|u_m'(t)\|_{L^2}^2).
\end{equation}
Applying Gronwall's lemma and using (\ref{29}), we obtain
\begin{align}
1+\|u_m'(t)\|_{L^2}^2 & \leq (1+\|u_m'(0)\|_{L^2}^2) \exp (\int_0^t \|f'(s)\|_{L^2}ds) \nonumber \\
                & \leq (1+d_1^2) \exp (\int_0^t \|f'(s)\|_{L^2}ds).
\end{align}
From (\ref{35}), we get
\begin{align}
\mu \|u_m(t)\|_{H^1}^2 & \leq \frac{d_2}{\mu} + 2 (\|u_0\|_{L^2}^2+\frac{Td_2}{\mu})^{\frac{1}{2}} \|u_m'(t)\|_{L^2}+
\vartheta \|u_m\|_{L^{\beta+1}}^{\beta+1} \nonumber \\
                 & \leq \frac{d_2}{\mu} + (\|u_0\|_{L^2}^2+\frac{Td_2}{\mu})^{\frac{1}{2}} (1+\|u_m'(t)\|_{L^2}^2)+
\vartheta \|u_m\|_{L^{\beta+1}}^{\beta+1} \nonumber \\
                 & \leq \frac{d_2}{\mu} + (\|u_0\|_{L^2}^2+\frac{Td_2}{\mu})^{\frac{1}{2}}
                    (1+d_1^2) \exp (\int_0^t \|f'(s)\|_{L^2}ds)+ \vartheta \|u_m\|_{L^{\beta+1}}^{\beta+1} \nonumber \\
                  &   = d_4.                  
\end{align}
Thus for $0<t \leq T_m $, we have
\begin{align}
& \mu \|u_m(t)\|_{H^1}^2 \leq d_4, \quad \nonumber \\
&\Rightarrow
\|u_m(t)\|_{H^1} \leq \sqrt{\frac{d_4}{\mu}}. \nonumber
\end{align}
Now
\begin{align}
2\mu - c_3 \|u_m(t)\|_{H^1} & \geq 2\mu - c_3 \sqrt{\frac{d_4}{\mu}}\\& > \mu , \quad 0\leq t \leq T_m.
\end{align}
Then $T_m=T$, and (\ref{34}) implies
\begin{align}
\frac{d}{dt}\|u_m'(t)\|_{L^2}^2 + 2(2\mu - c_3\|u_m(t)\|_{H^1})\|u_m'(t)\|_{H^1}^2 & \leq 2\|f'(t)\|_{L^2}\hspace{.1cm}\|u_m'(t)\|_{L^2},
\quad 0 \leq t \leq T. \label{40}
\end{align}
Using Gronwall's Lemma, we can conclude that $u_m' $ is a bounded set of $L^2(0,T;V) \cap L^{\infty}(0,T;H)$. 
The proof of Lemma \ref{t2l2} is finished.
\end{proof}
It follows from  Lemma \ref{t2l1} and Lemma \ref{t2l2}, the proof of the theorem is completed.
\end{proof}

\section{Uniqueness of solution} \label{unique}
In this section, we prove the uniqueness theorem for the solution obtained in section \ref{exstrong} of the system (\ref{be})--(\ref{nosl}). 
\begin{theorem}\label{thm3}
Let $u_0 \in \mathcal{W}$, $f \in L^{\infty}(0,T;H)$, $f' \in L^1(0,T;H)$. Then there is atmost one solution $u$
of the damped Navier-Stokes system (\ref{be})--(\ref{nosl}) such that
\begin{equation*}
u \in L^{\infty}(0,T;L^4(\Omega)), ~~~~ u' \in L^2(0,T;V) \cap L^{\infty}(0,T;H).
\end{equation*}
\end{theorem}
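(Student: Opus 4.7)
The plan is to prove uniqueness by a standard energy argument: suppose $u_1,u_2$ are two solutions satisfying the hypotheses of the theorem, set $w:=u_1-u_2$, derive a differential inequality of Gronwall type for $\|w(t)\|_{L^2}^2$, and conclude from $w(0)=0$ that $w\equiv 0$. The regularity hypothesis $u'\in L^2(0,T;V)\cap L^\infty(0,T;H)$ makes $w'\in L^2(0,T;V)$, so that $t\mapsto\|w(t)\|_{L^2}^2$ is absolutely continuous and one may legitimately test the difference equation against $w$ itself.

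The first step is to subtract the weak formulations for $u_1$ and $u_2$, use the splitting $(u_1\cdot\nabla)u_1-(u_2\cdot\nabla)u_2=(w\cdot\nabla)u_1+(u_2\cdot\nabla)w$, and test against $v=w\in V$. The cancellation $b(u_2,w,w)=0$, valid because ${\rm div}\,u_2=0$ and $u_2\cdot\nu=0$ on $\partial\Omega$, kills one of the trilinear contributions and produces
\begin{align*}
\tfrac12\tfrac{d}{dt}\|w\|_{L^2}^2+2\mu\|\nabla w\|_{L^2}^2+\mu\int_{\partial\Omega}\alpha(w\cdot\tau)^2\,dS+\vartheta\bigl(|u_1|^{\beta-1}u_1-|u_2|^{\beta-1}u_2,w\bigr)=-b(w,u_1,w).
\end{align*}
The boundary integral is non-negative since $\alpha>0$, and the damping inner product is non-negative by monotonicity of $z\mapsto|z|^{\beta-1}z$ for $\beta\geq 1$ (its Jacobian $|z|^{\beta-1}I+(\beta-1)|z|^{\beta-3}z\otimes z$ is positive semi-definite). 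I would drop both of these favorable terms when forming the upper bound.

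The main obstacle is the trilinear remainder $b(w,u_1,w)$. Integration by parts, using ${\rm div}\,w=0$ and $w\cdot\nu=0$, gives $b(w,u_1,w)=-b(w,w,u_1)$, and H\"older together with the 3-D Gagliardo--Nirenberg inequality $\|w\|_{L^4}\leq C\|w\|_{L^2}^{1/4}\|w\|_{H^1}^{3/4}$ yields
\begin{align*}
|b(w,u_1,w)|\leq \|u_1\|_{L^4}\|w\|_{L^4}\|\nabla w\|_{L^2}\leq C\|u_1\|_{L^4}\|w\|_{L^2}^{1/4}\|w\|_{H^1}^{7/4}.
\end{align*}
Young's inequality with exponents $(8/7,8)$ then gives $|b(w,u_1,w)|\leq \mu\|w\|_{H^1}^2+C(\mu)\|u_1\|_{L^4}^{8}\|w\|_{L^2}^2$, and the first summand is absorbed by the viscous term on the left.

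Because $u_1\in L^\infty(0,T;L^4(\Omega))$ by hypothesis, the coefficient $\|u_1(t)\|_{L^4}^8$ is bounded uniformly in $t$, so one obtains a differential inequality $\tfrac{d}{dt}\|w(t)\|_{L^2}^2\leq K\|w(t)\|_{L^2}^2$ with a constant $K$. Since $w(0)=0$, Gronwall's lemma forces $w\equiv 0$. The crux is exactly the estimate of $b(w,u_1,w)$: one has to pull out $\|u_1\|_{L^4}$ with a time-integrable power, which is precisely what the $L^\infty(0,T;L^4)$ regularity was introduced to guarantee. The monotonicity of the damping is a fortunate simplification that removes the $\vartheta$-contribution entirely and keeps the argument valid down to $\beta=1$.
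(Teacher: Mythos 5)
Your proposal is correct and follows essentially the same route as the paper: an energy estimate for the difference $w=u_1-u_2$, discarding the damping term by monotonicity of $z\mapsto|z|^{\beta-1}z$ and the boundary term by positivity of $\alpha$, reducing the convective contribution to a single trilinear term (your $-b(w,u_1,w)$ is the paper's $b(u,u,u_2)$ up to the skew-symmetry identity), estimating it by $C\|u_i\|_{L^4}\|w\|_{L^2}^{1/4}\|w\|_{H^1}^{7/4}$, absorbing via Young's inequality with exponent $8$, and closing with Gronwall. No substantive differences.
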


\begin{proof}
Let us assume that $u_1$ and $u_2$ are two solutions, and let $u=u_1 - u_2$. The difference $u=u_1 - u_2$
satisfies
\begin{align}
 u' + \mu Au + \vartheta|u_1|^{\beta-1}u_1-\vartheta|u_2|^{\beta-1}u_2 &= -Bu_1 + Bu_2,\label{41}\\
 u(0)&=0.\nonumber
\end{align} 
Taking the scalar product of (\ref{41}) with $u(t)$, we obtain
\begin{align}
\frac{1}{2} &\frac{d}{dt} \|u(t)\|_{L^2}^2 + 2\mu \|u(t)\|_{H^1}^2 + \int_{\Omega} \bigg( |u_1|^{\beta-1}u_1 - |u_2|^{\beta-1}u_2\bigg)(u_1-u_2) dx \nonumber \\
&+ \mu \int_{\partial \Omega} \alpha (u(t) \cdot \tau)^2 dS = 2 b(u_2(t),u_2(t),u(t))-2b(u_1(t),u_1(t),u(t)). \label{42}
\end{align}
Note that
\begin{align*}
\int_{\Omega}& \bigg( |u_1|^{\beta-1}u_1 - |u_2|^{\beta-1}u_2\bigg)(u_1-u_2) dx \\ 
&\geq \int_{\Omega} \bigg(  |u_1|^{\beta+1} - |u_2|^{\beta}|u_1|-|u_1|^{\beta}|u_2| + |u_2|^{\beta+1} \bigg) dx \\
& = \int_{\Omega} (|u_1|^{\beta}-|u_2|^{\beta})(|u_1|-|u_2|) dx \geq 0.
\end{align*}
Also
\begin{align}
2b(u_2,u_2,u_1-u_2)-2b(u_1,u_1,u_1-u_2) & = 2b(u_2,u_2,u_1)+2b(u_1,u_1,u_2) \nonumber \\
                                        & = 2b(u_2,u_2,u_1)-2b(u_1,u_2,u_1) \nonumber \\
                                        & = -2b(u_1,u_2,u_1) + 2b(u_2,u_2,u_1) \nonumber \\
                                        & \hspace{.4cm}+ 2b(u_1,u_2,u_2) -2b(u_2,u_2,u_2) \nonumber \\
                                        & = -2b(u_1-u_2,u_2,u_1) + 2b (u_1-u_2,u_2,u_2) \nonumber \\
                                        & = -2b(u_1-u_2,u_2,u_1-u_2) \nonumber \\
                                        & = 2b(u,u,u_2).    \label{43}                                      
\end{align} 
Now (\ref{42}) becomes
\begin{align}
\frac{1}{2} &\frac{d}{dt} \|u(t)\|_{L^2}^2 + 2\mu \|u(t)\|_{H^1}^2 \leq 2b(u,u,u_2). \label{44} 
\end{align}
By the H\"{o}lder inequality and (\ref{p2})
\begin{align}
|b(u,u,v)| \leq k_1 \|u\|_{L^2}^{\frac{1}{4}} \|u\|_{H^1}^{\frac{7}{4}} \|v\|_{L^4(\Omega)} .\label{45}
\end{align} 
Now (\ref{44}) becomes
\begin{align*}
\frac{1}{2} \frac{d}{dt} \|u(t)\|_{L^2}^2 + 2\mu \|u(t)\|_{H^1}^2  
      & \leq 2 k_1 \|u\|_{L^2}^{\frac{1}{4}} \|u\|_{H^1}^{\frac{7}{4}} \|u_2(t)\|_{L^4(\Omega)} \nonumber \\
      & \leq 2 \mu \|u(t)\|_{H^1}^2  + k_2\|u(t)\|_{L^2}^2 \|u_2(t)\|_{L^4(\Omega)}^8 .
\end{align*}
So we get
\begin{align*}
 \frac{d}{dt} \|u(t)\|_{L^2}^2 \leq k \|u(t)\|_{L^2}^2 \|u_2(t)\|_{L^4(\Omega)}^8 . 
\end{align*}
Applying Gronwall's Lemma, we conclude that
\begin{align*}
\|u(t)\|_{L^2}^2 &\leq \|u(0)\|_{L^2}\exp \Bigg(\int_0^t k\|u_2(t)\|_{L^4(\Omega)}^8 dt \Bigg)\\
&=0.
\end{align*}
Thus $u_1(t)=u_2(t)$. The proof of the Theorem \ref{thm3} is finished.
\end{proof}

\textbf{Acknowledgements}\\
 The first author acknowledge the grant from NBHM (Grant No.02011/9/2019\\
 NBHM(R.P.)/R ~and ~ D II/1324) and 
 the second author is supported by the DST MATRICS 
(Grant No. SERB/F/12082/2018-2019).

{\bf Author name and affiliations:}\\
 Rajib Haloi, Department of Mathematical  Sciences,\\
  Tezpur University, Sonitpur\\
   ASSAM, INDIA, Pin- 784028.\\
E-mail:rajib.haloi@gmail.com\\
Phone number +913712-275511,\\
 Fax: +913712-267006.\\
 Subha Pal, Department of Mathematical  Sciences,\\
  Tezpur University, Sonitpur\\
   ASSAM, INDIA, Pin- 784028.\\
E-mail:sp234sp@gmail.com\\
Phone number +913712-275511,\\
 Fax: +913712-267006.

\end{document}